\documentclass[11pt]{article}
\usepackage{amsmath, amssymb, amsthm}
\usepackage{graphicx}
\usepackage{sham}
\usepackage{mathrsfs}
\usepackage{calc,tikz}
\usepackage{float}
\usetikzlibrary{arrows,decorations.markings,positioning,shapes.geometric}

\tikzstyle{vertex}=[circle, draw, inner sep=1pt, minimum size=4pt]

\tikzstyle{ann} = [fill=white,font=\footnotesize,inner sep=1pt]
\tikzstyle{arrow} = [thick,<-->,>=stealth]

\DeclareMathAlphabet{\mathpzc}{OT1}{pzc}{m}{it}

\title{\Large\bf Construction of additively graceful signed graphs-I}
\author{Mukti Acharya}

\begin{document}
\maketitle
\address{Former Professor and Head, Department of Applied Mathematics \\ Delhi Technological University, Delhi- 110042\\
mukti1948@gmail.com}  

\begin{abstract}	
%{\footnotesize }
In this paper, we construct additively graceful signed graphs $S$ from a given graph $G$ that may be additively graceful or not be additively graceful. We also show the construction of additively graceful signed graphs from additively graceful signed graphs. We find the values of $m,n$ in non-divisible sum graph, denoted as $G(m,n)$, that admit additively graceful labeling. 

{\bf Keywords}: Graceful graph, additively graceful graph, signed graph, additively graceful signed graph, non-divisible sum graph.

\indent {\small {\bf 2020 Mathematics Subject Classification codes}}:05C78
\end{abstract}

 \hrulefill

%{\small \textbf{Keywords: .} }

%\indent {\small {\bf 2020 Mathematics Subject Classification codes: 05C07, 05C45, 05C69, 05C99} }

\section{Introduction}
By a graph, we mean an undirected  simple graph. All graphs considered in this paper are connected and finite unless otherwise mentioned. Let $G = (V, E)$ be a graph such that $|V (G)|$ = $p$ and $|E(G|$ = $q$, where $V(G)$ and $E(G)$ are called the vertices and edges in G respectively. Consider a signed graph $S=(G,\sigma)$, where $G=(V,E)$ is called the underlying graph of $S$ and $\sigma: E(G)\rightarrow \{+,-\}$ is called the signature of $G$. The edges receiving the  $+$ sign are called positive edges, and the edges receiving the $-$ sign are called negative edges. Positive edges are drawn as solid lines  and negative edges are drawn as dashed lines as shown in Figure 1. If $\sigma :uv \rightarrow \{+\}$ for every edge $uv$ in $S$ then $S$ is called an all-positive signed graph and if $\sigma :uv \rightarrow \{-\}$ for every edge $uv$ in $S$ then $S$ is called an all-negative  signed graph.  A signed graph is called homogeneous if it is either all-positive or  all-negative and is  heterogeneous otherwise. By a positive homogeneous signed graph we mean an all-positive signed graph and a negative homogeneous signed graph is  an all-negative  signed graph. 
 By a $(p,m,n)$ signed graph $S$, we mean a signed graph $S$ such that $|V (S)|$ = $p$ and $|E(S|$ = $m+n$. By $E^+(S)$ and $E^-(S)$ we denote the set of positive and negative edges in $S$, respectively. In our case $|E^+(S)|$ = $m$ and $|E^-(S)|$ = $n$. \\

Today there is a wealth of literature on various types of graph labeling, and all of them can not be listed here, but to have some information on this topic, we refer to \cite{G}. For all the terms in graph theory not defined here, we refer to \cite{H} and for signed graph labeling, we refer to (\cite{MA1},\cite{ACK}) \\
Now we give a few definitions that are needed for our work.
\begin{defn}\cite{G}   Let G be a $(p,q)$ graph and for an injective function $f : V (G)\to\{0, 1,  \ldots,q\}$  when  each edge $uv$ in $G$ is given the  label  $ g(uv) = |f(u) - f(v)|$  such that the resulting edge labels are distinct  and range from $1,2,..., q$. A graph $G$ that admits such  labeling is called a graceful graph. 
\end{defn}
\begin{defn} \cite{MA}
 Let $S=(p,m,n)$ be a signed graph and $f : V (S)\to\{0, 1,  \ldots,q=m+n\}$ be an injective function and when  each edge $uv$ in $S$ is given the  label  $ g(uv) =\sigma(uv) |f(u) - f(v)|$  such that the positive edges receive the labels 1,2,..., m and the negative edges receive the labels  -1,-2,...,-n. A signed graph $S$ that admits such  labeling is called a graceful signed graph.
\end{defn}

\begin{defn} \cite{S}  A $(p,q)$-graph $G=(V,E)$  with $q\geq 1$ and $p\geq 2$ is said to be additively graceful if it admits a labeling $f : V (G)\to\{0, 1, ...,\left\lceil\frac{q+1}{2}\right\rceil\}$ such that the edge induced labels defined as $g(uv)$ = $f(u)+ f(v)$ are all distinct and range from $1,2,..., q$. 
\end{defn}
The following theorem gives the relation between the number of vertices and the number of edges if  a graph $G$ is additively graceful. 
\begin{thm} \cite{S} If $G$ is additively graceful $(p,q)-graph$ then $q\geq 2p-4$ and the bounds are best possible. 
\end{thm}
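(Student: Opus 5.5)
The plan is a counting argument on the vertex labels, followed by an explicit family of graphs showing the bound is attained. I will use the definition of additive gracefulness from \cite{S} in which the vertex labeling $f$ is \emph{injective}. The definition as reproduced above does not say this explicitly, and the argument below relies on it; without injectivity the counting step fails as written.

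\textbf{Step 1 (counting).} Since $f$ is injective, the $p$ vertices receive $p$ distinct labels from $\{0,1,\ldots,\lceil\frac{q+1}{2}\rceil\}$, a set of $\lceil\frac{q+1}{2}\rceil+1$ elements. Hence
$$p\le \left\lceil\tfrac{q+1}{2}\right\rceil+1\le \tfrac{q+2}{2}+1 .$$
Rearranging gives $2p\le q+4$, that is, $q\ge 2p-4$. The only point needing care is the ceiling: $\lceil\frac{q+1}{2}\rceil\le\frac{q+2}{2}$ holds for both parities of $q$, so no case split is needed.

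\textbf{Step 2 (sharpness).} For each $p\ge 3$ I will exhibit a graph with $q=2p-4$ edges that is additively graceful. Label the vertices $v_0,\ldots,v_{p-1}$ with $f(v_i)=i$. Join $v_0$ to each of $v_1,\ldots,v_{p-1}$, which produces the edge sums $1,2,\ldots,p-1$. Then join $v_{p-1}$ to each of $v_1,\ldots,v_{p-3}$, which produces the sums $p,p+1,\ldots,2p-4$.

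This graph has the following properties:
\begin{itemize}
\item It is simple and connected, since $v_0$ is adjacent to every other vertex.
\item It has $(p-1)+(p-3)=2p-4$ edges.
\item Its edge sums are exactly $1,\ldots,2p-4=q$, each occurring once.
\item Its largest vertex label is $p-1=\lceil\frac{2p-3}{2}\rceil=\lceil\frac{q+1}{2}\rceil$, so the labels lie in the allowed range.
\end{itemize}
For $p=3$ this graph is the path with labels $1,0,2$. Hence equality $q=2p-4$ is achieved for every $p\ge3$, and the bound is best possible.

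\textbf{Main obstacle.} The computation itself is trivial; the delicate point is the injectivity hypothesis behind Step 1. I would explicitly invoke the convention of \cite{S} that the labeling is injective. If only distinctness of the edge sums were assumed, a vertex label could repeat, and the count in Step 1 would then have to be replaced by a different argument. I do not see such an argument immediately, so I would not try to prove the bound in that weaker setting.
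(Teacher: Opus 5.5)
Your argument is correct and complete. The paper gives no proof of this statement; it is only quoted from \cite{S}, so there is nothing to compare against. The count $p\le\lceil\frac{q+1}{2}\rceil+1\le\frac{q+4}{2}$ gives the inequality. For sharpness, your family takes a vertex labelled $0$ adjacent to all others, plus the vertex labelled $p-1$ joined to those labelled $1,\dots,p-3$. It is simple and connected, and has $2p-4$ edges with sums exactly $1,\dots,2p-4$. Its labels go up to $p-1=\lceil\frac{2p-3}{2}\rceil$, so equality holds for every $p\ge 3$.

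One thing you can sharpen: your worry about injectivity is not a limitation of your method, because the statement is \emph{false} without it. So the alternative argument you say you cannot find does not exist. For example, take $K_{1,3}$ with centre labelled $1$ and leaves labelled $0,1,2$. It uses labels from $\{0,\dots,\lceil\frac{q+1}{2}\rceil\}=\{0,1,2\}$ and has edge sums $1,2,3$, yet $q=3<4=2p-4$. More generally, labelling the vertices of any path $0,1,1,2,2,3,\dots$ produces edge sums $1,2,\dots,q$ with all labels in range. So the missing word ``injective'' in the paper's definition of an additively graceful $(p,q)$-graph must be a transcription slip; the paper's signed-graph definition does state injectivity. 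You should build injectivity into the definition rather than present it as a hedge.
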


Motivated by the concept of additively graceful graphs, Pereira et al. \cite{J}  have initiated the study on additively graceful signed graphs. For the  definition of additively graceful signed graphs, we refer to D’Souza and Pereira \cite{B}.
\begin{defn} For a $ (p,m,n)$ signed graph $S$, an additively graceful
labeling of $S$ is an injective mapping $f: V(S)\to\{0, 1, \ldots, (m + \left\lceil\frac{n+1}{2}\right\rceil)\}$  such that the  edge function defined as $g(uv) = f(u) + f(v)$ for every $uv$ in $E^-( S)$ and $g(uv) =|f(u) - f(v)|$ for every   $uv$ in  $E^+(S)$ is such that $E^+(S)= \{1,2,...,m\}$ and $E^-(S)= \{1,2, \ldots,n\}$. A signed graph which admits such a labeling is called an additively graceful signed graph. 
\end{defn}
\begin{defn}\cite{IC} The vertex set of $G_{m,n}$ is $V = \{1, 2, 3, \ldots, n\}$ and two distinct vertices $a, b \in V( G_{m,n})$ are adjacent if and only if $a \neq b$ and $a + b$ is not divisible by $m$, where $m \in \mathbb{N}$ and $m > 1$.
\end{defn}
\begin{defn} \cite{CLB}
A graph is called complement reducible if it can be reduced to an edgeless graph by successively taking complements within components. 
\end{defn}

\section{ Construction of Additively Graceful Signed Graphs} 
\begin{thm} Let $(u,v,w)$  be an all-negative $P_3$. We take $m$ pendant vertices and join them to $w$ by positive edges, and such a constructed signed graph $S$ is an additively graceful signed graph.
\end{thm}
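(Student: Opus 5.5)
Here $S$ is a $(m+3,m,2)$ signed graph: there are $m$ positive edges (the pendant edges at $w$) and two negative edges $uv$ and $vw$. So $n=2$ and $\lceil\frac{n+1}{2}\rceil=2$. An additively graceful labeling must therefore be an injective map $f:V(S)\to\{0,1,\ldots,m+2\}$ with two requirements. First, the sums $f(u)+f(v)$ and $f(v)+f(w)$ must be exactly $\{1,2\}$. Second, the differences $|f(w)-f(x_i)|$ over the pendant vertices $x_1,\ldots,x_m$ must be exactly $\{1,\ldots,m\}$. I will simply write down an explicit labeling and check both conditions.

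\emph{Negative edges.} Since both sums lie in $\{1,2\}$ and the labels are distinct nonnegative integers, the only possibility is $f(v)=0$ with $\{f(u),f(w)\}=\{1,2\}$. (If $f(v)\neq 0$, then both $f(u)$ and $f(w)$ would have to be small and equal, or one of them would be $0$, which breaks injectivity.) I take $f(v)=0$, $f(u)=1$, $f(w)=2$. This gives $g(uv)=1$ and $g(vw)=2$, as required.

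\emph{Positive edges.} The pendant vertices must receive the remaining labels from $\{3,4,\ldots,m+2\}$. There are exactly $m$ such labels, so the assignment is forced up to order. I set $f(x_i)=i+2$ for $1\le i\le m$. Then $g(wx_i)=|2-(i+2)|=i$, so the positive edge labels are exactly $1,2,\ldots,m$. The map $f$ is injective, with image $\{0,1,\ldots,m+2\}=\{0,\ldots,m+\lceil\frac{n+1}{2}\rceil\}$.

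The only point needing care is the bookkeeping that $n=2$ gives the codomain bound $m+2$. One should also note that labels $1$ and $2$ are used up by $u$ and $w$, so none of $0,1,2$ is available for a pendant vertex. The alternative choice $f(w)=1$ fails: the pendants would then take labels $3,\ldots,m+2$ with differences $2,\ldots,m+1$, missing $1$. So $f(w)=2$ is the right choice, and with it the verification above completes the proof. The case $m=0$ is trivially covered as well.
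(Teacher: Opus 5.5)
Your proof is correct and uses exactly the paper's labeling, $f(v)=0$, $f(u)=1$, $f(w)=2$, $f(p_i)=i+2$, with the same check that the negative edges get labels $1,2$ and the pendant positive edges get labels $1,\ldots,m$. Your additional remarks, that $f(v)=0$ is forced and that $f(w)=1$ would fail, are correct but not needed, since the statement only asks for the existence of a labeling.
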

\begin{proof} Let $p_1,p_2,\ldots, p_m$ be  pendant vertices that are joined to $w$ by positive edges. Clearly $|V(S)|= m+3$ and $|E(S)|= m+2$, where $m$ is the number of positive edges. We claim that $S$ is an additively graceful signed graphs. For $f: V(S)\rightarrow\{0,1,2,\ldots,(m+2)\}$ we define $f(v)= 0$, $f(u)= 1$, $f(w)= 2$ and $f(p_i)=2+i,i\rightarrow\{1,2,..., m\}$.  
 The negative edges $uv$ and $vw$ receive  labels 1 and 2 respectively, and the positive edges  $wp_i, i=1,2,...,m $ receive  labels $1,2,...,m $, respectively. The construction of this signed graph is shown in Figure 1.
\end{proof}

\begin{figure} [H]
\centering
\begin{tikzpicture} [scale=1,auto=left]
\node[vertex] (v1) at (1,0) [fill, label=below : $0$] {};
\node[vertex] (v7) at (2.5,0)  [fill,label=below:$1$] {};
\node[vertex] (v11) at (-0.5,0) [fill,label=below:$2$]  {};

\node[vertex] (v4) at (-1.0,1) [fill,label=above:$3$] {};

\node[vertex] (v5) at (-2.5,1) [fill,label=above:$4$] {};
\node[vertex] (v10) at (-2.5,-1) [fill,label=below:$5$] {};
\node[vertex] (v6) at (-1.0,-1) [fill, label=below : $6$] {};

\path
(v11) edge [dashed] (v1)
(v11) edge (v10)
(v11) edge (v4)
(v1)  edge [dashed] (v7)
(v11) edge (v5)
(v11) edge (v6)
;
\end{tikzpicture}
\caption{}
\end{figure} 

 Note that $P_3$ which we have  taken to construct an additively graceful signed graph is an additively graceful signed graph, and the underlying graph of $P_3$ is also additively graceful. The constructed S is a bistar. We pose the following question and try to find its answer.

Question. Is it the only additively graceful signed bistar?

To answer this question, first we prove the following result.
\begin{thm}  $ T= K_1,_p$ is an additively graceful signed graph if and only if it
 has exactly one negative edge.
\end{thm}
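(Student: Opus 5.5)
The proof splits into the two directions. Throughout, let $c$ be the centre of the star $K_{1,p}$ and $x_1,\dots,x_p$ its leaves. Suppose $S$ has $m$ positive and $n$ negative edges, so $m+n=p$. Let $f$ be an additively graceful labeling with values in $\{0,1,\dots,m+\lceil (n+1)/2\rceil\}$, and write $a=f(c)$.

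\emph{Sufficiency.} Let $n=1$ and $m=p-1$, so the label set is $\{0,1,\dots,m+1\}$. I would mimic the labeling used in the preceding theorem. Put $f(c)=0$, give the end of the negative edge the label $1$, and give the $m$ positive leaves the labels $2,3,\dots,m+1$. Then the negative edge gets $0+1=1$, but the positive edges get $2,\dots,m+1$, which is not $\{1,\dots,m\}$. So this choice fails and needs adjusting. Instead take $f(c)=1$, give the negative leaf the label $0$ (edge label $1$), and give the positive leaves the labels $2,\dots,m+1$. The positive edges then receive $|i-1|=1,\dots,m$. This labeling is injective and uses only labels from $\{0,\dots,m+1\}$. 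It also covers $m=0$, i.e.\ $K_{1,1}$ with one negative edge.

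\emph{Necessity: ruling out $n=0$.} If $n=0$, then the label set is $\{0,\dots,p+\lceil 1/2\rceil\}=\{0,\dots,p+1\}$ (reading the definition literally), and $S$ is all-positive. Whether this case is excluded depends on how the definition treats $n=0$, since the condition $E^-(S)=\{1,\dots,n\}$ is then vacuous. I would argue that $n=0$ is excluded because an additively graceful signed graph is required to have negative edges. I would flag this as a convention to settle against the source definition of D'Souza and Pereira; the real content of the theorem is the bound $n\le 1$.

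\emph{Necessity: ruling out $n\ge 2$.} This is the main step. The negative edges need distinct sums $a+f(x_j)=1,2,\dots,n$, so the negative leaves carry the labels $1-a,\dots,n-a$. Since labels are nonnegative, $a\le 1$. If $a=1$, the negative leaves use $0,\dots,n-1$, which include the value $a=1$ when $n\ge2$, contradicting injectivity. Hence $a=0$ and the negative leaves carry $1,\dots,n$. The positive leaves must then carry $|f(x_i)-0|=1,\dots,m$. For $m\ge1$ this clashes with the label $1$ already used by a negative leaf, contradicting injectivity.

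The remaining case is $m=0$, i.e.\ an all-negative star with $n\ge2$. Here the labels $0,1,\dots,n$ are all used, but the maximum allowed label is $\lceil (n+1)/2\rceil<n$ for $n\ge 2$. This is a contradiction. So $n\ge2$ is impossible, which together with sufficiency shows exactly one negative edge is forced. I expect the only delicate point in the argument to be the $n=0$ convention; the injectivity counting for $n\ge2$ is straightforward.
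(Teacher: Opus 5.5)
\textbf{The step that fails.} The last step of your necessity argument is wrong. You claim $\lceil (n+1)/2\rceil<n$ for all $n\ge 2$, but at $n=2$ we have $\lceil 3/2\rceil=2=n$.

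For $m=0$, $n=2$ the label set is $\{0,1,2\}$. Putting $f(c)=0$ and the labels $1,2$ on the two leaves gives negative edge labels $1,2$. This is a valid additively graceful labeling of the all-negative $K_{1,2}$. So this case cannot be closed: the statement itself is false there. The paper even remarks, right after its first theorem, that the all-negative $P_3=K_{1,2}$ is an additively graceful signed graph.

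The paper's own proof misses this too. It takes $n=2$ ``without loss of generality'' and puts $0$ on the centre without justification. It then gets its contradiction from needing a positive edge with label $1$, which tacitly assumes $m\ge 1$.

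The case $n=0$ has the same problem. Under the literal definition, the all-positive star with centre $0$ and leaves $1,\dots,p$ is additively graceful, since the label bound is $p+1$. So ``exactly one negative edge'' is only forced if $n\ge 1$ is imposed as an explicit hypothesis. It cannot be argued from the definition in the way you propose.

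\textbf{What is salvageable.} Under the hypotheses $m\ge 1$ and $n\ge 1$, your necessity argument is complete, and it is tighter than the paper's:
\begin{itemize}
\item you derive $f(c)\le 1$ from the requirement that some negative edge has sum $1$;
\item you rule out $f(c)=1$ by injectivity;
\item you obtain the clash at label $1$ for arbitrary $n\ge 2$, whereas the paper simply assumes $n=2$ and $f(c)=0$.
\end{itemize}
For $m=0$, your own computation shows the correct statement is that the all-negative $K_{1,n}$ is additively graceful if and only if $n\le 2$.

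Your sufficiency labeling (centre $1$, negative leaf $0$, positive leaves $2,\dots,m+1$) is the same as the paper's. Remove the failed first attempt from the write-up.
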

%\begin{figure} [H]
\begin{proof} Necessity: $|V(T)| = m+n+1$, where  $m$ are positive edges and $n$ are negative edges, and $u$ is a vertex in $T$ such that  $d(u)= p=m+n$. By $v_i, i\rightarrow\{1,2,..., m+n\}$ we mean the pendent vertices. 
Let $T$ be additively graceful, that is, it admits additively graceful labeling and we assume that $n\geq 2$. Without loss of generality, we assume that $n = 2$ and $f : V (G)\to\{0, 1,\ldots,(m+2)\}$ is an injective function. According to the definition of an additively graceful signed graph, the labels of negative edges must be 1 and 2. Now we assign labels $0,1$ and $2$ to $u,v_1$ and $v_2$ respectively, where $(u,v_1)$ and $(u, v_2)$ are the negative edges. 
The labels of positive edges must be $(1,2,3,\ldots,m)$ which is not possible, as to obtain the edge labels 1 and 2 we have to label the end vertices of two positive edges by 1 and 2. Thus, it is a contradiction to the fact that $f : V (G)\to\{0, 1,\ldots,(m+2)\}$ is an injective function. Hence, $n\geq 2$ does not hold and $T$ must have exactly one negative edge.\\
Sufficiency: Now we show that $T$ admits additively graceful labeling when it has exactly one negative edge. For $f : V (G)\to\{0, 1,\ldots,(m+1)\}$, we define 
 $f(u) = 1$, $f(v_1) = 0$ and $f(v_i)= f(u)+i ,i = 1,2,..., m$, where $v_i's$ are the vertices adjacent to $u$ by positive edges. Clearly, the negative edge $uv_1$ has label $1$, and  positive edges  $uv_i,i= 2,3,...,m$ have labels  {1,2,...,m}. Hence, $T$ having exactly $1$ negative edge is an additively graceful signed graph.
\end{proof} 
Now we construct a bistar $B(m,n)$  obtained from a positive edge $uv$ having $m-1$ positive edges incident with $u$ and a negative edge $vw$ incident with $v$. 
\begin{thm}  $B(m,n)$  obtained from a positive edge $uv$ having $(m-1)$ positive edges incident with $u$ and one negative edge $vw$ incident with $v$ admits additively graceful labeling.
\end{thm}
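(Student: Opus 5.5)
Since the claim is existential, I would prove it by writing down an explicit labeling, in the same spirit as the two preceding constructions. First fix the parameters. The bistar $B(m,n)$ here has the positive edge $uv$, the $m-1$ positive pendant edges $ux_1,\ldots,ux_{m-1}$ at $u$, and exactly one negative edge $vw$. So there are $m$ positive edges and one negative edge, $|V|=m+2$, and by the definition of an additively graceful signed graph the labels must come from $\{0,1,\ldots,m+\lceil 2/2\rceil\}=\{0,1,\ldots,m+1\}$. The requirements are:
\begin{itemize}
\item the positive edges must receive the absolute differences $1,2,\ldots,m$;
\item the single negative edge must receive the sum $f(v)+f(w)=1$.
\end{itemize}

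The negative-edge condition forces $\{f(v),f(w)\}=\{0,1\}$. Taking $f(v)=0$ would force $f(w)=1$, and then $u$ and the $x_i$ would have to use labels from $\{2,\ldots,m+1\}$. The differences at $u$ would then be at most $m-1$, so the difference $m$ could never be produced. For this reason I will try $f(v)=1$ and $f(w)=0$ instead, mirroring the proof of the preceding theorem on $K_{1,p}$.

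The remaining vertices are $u,x_1,\ldots,x_{m-1}$, and they receive the $m$ labels $\{2,3,\ldots,m+1\}$, which is exactly the right count. I then need $|f(u)-1|$ together with the values $|f(u)-f(x_i)|$ to be precisely $\{1,\ldots,m\}$. Putting $f(u)=2$ gives $|f(u)-f(v)|=1$. Putting $f(x_i)=2+i$ for $i=1,\ldots,m-1$ gives the differences $1,\ldots,m-1$. This duplicates the value $1$ and never reaches $m$, so that choice fails.

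The fix is to put $u$ at the top: $f(u)=m+1$. Then $|f(u)-f(v)|=m$. The $x_i$ take the labels $2,3,\ldots,m$, say $f(x_i)=i+1$, which gives the differences $m-i$ for $i=1,\ldots,m-1$, that is, exactly $1,\ldots,m-1$. To finish, I will check three things:
\begin{itemize}
\item $f$ is injective, since the labels used are $0,1,\ldots,m+1$, each once;
\item the positive edges receive exactly $\{1,\ldots,m\}$;
\item the negative edge $vw$ receives $0+1=1$.
\end{itemize}

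The only real obstacle is the placement of $u$. Its difference with $v$ must supply the extreme value $m$ so that the pendant edges can fill $1,\ldots,m-1$ without collision, and once that is seen the verification is routine. I will also note the degenerate case $m=1$, where the graph is just the path $u,v,w$ with labels $2,1,0$, and this case is covered by the same formula.
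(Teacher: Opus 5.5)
Your labeling is the paper's labeling: $f(w)=0$, $f(v)=1$, $f(u)=m+1$, and the pendant vertices at $u$ take the labels $2,\ldots,m$. The paper writes $f(w_i)=m+1-i$ and you write $f(x_i)=i+1$, which is only a reindexing, so your proof is correct and essentially identical to the paper's.

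One small aside: your motivational claim that $f(v)=0$ fails because the differences at $u$ are at most $m-1$ overlooks that $uv$ would then get difference $f(u)$, which can equal $m$. The actual obstruction when $f(u)=m$ is that the difference $1$ appears twice. This does not affect the proof, since only one labeling is needed.
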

\begin{proof}  In $B(m,n)$, $|V(B(m,n)|=m+2$. For $f : V(B(m,n))\to\{0, 1, , \ldots,(m+1)\}$, we define \\  
$f(v) = 1$,  $f(w) = 0$ , $f(u)= m+1$ and $f(w_i)= f(u)-i ,i\rightarrow\{1,2,\ldots, (m-1)\}$, where $w_i's$ are the vertices adjacent to $u$ by positive edges. Clearly, the negative edge $vw$ has label $1$, the positive edge  $uv$ has label m, and the positive edges $uw_i$,$i= 1,2,...,m$-$1$ have labels  {1,2,...,m$-$1}. Thus, $B(m,n)$ admits additively graceful labeling. An additively graceful labeling of $B(4,1)$ is shown in Figure 2.
\end {proof}

\begin{figure} [H]
\centering
\begin{tikzpicture} [scale=1,auto=left]
\node[vertex] (v1) at (1,0) [fill, label=below : $1$] {};
\node[vertex] (v7) at (2.5,0)  [fill,label=below:$0$] {};
\node[vertex] (v11) at (-0.5,0) [fill,label=below:$5$]  {};

\node[vertex] (v4) at (-.71,2) [fill,label=above:$2$] {};

\node[vertex] (v5) at (-2.5,1) [fill,label=left:$3$] {};
\node[vertex] (v10) at (-2.5,-1) [fill,label=left:$4$] {};

\path
(v11) edge (v1)
(v11) edge (v10)
(v11) edge (v4)
(v1) edge [dashed] (v7)
(v11) edge (v5)
;
\end{tikzpicture}
\caption{} \label{f:fig2}
\end{figure} 

Observe that $B(m,n)$, obtained from an all-positive $K_1,_m$, is graceful as well as  additively graceful,  but its underlying  graph is not additively graceful.\\
\indent In \cite{S}, it has been shown that $K_3$ is an additively graceful graph. Using 
$K_3$, we construct an additively graceful signed graph. Let $ u,v,w $ be the vertices of a all-negative $K_3$ and at $w$ we attach $m$ positive edges and denote this signed graph by $ST$. \\
We also construct an additively graceful signed graph from $K_3$ having   exactly one negative edge and attach $m$ pendant vertices by positive edges at a vertex of $K_3$ having negative degree one and denote this signed graph by $STE$. Now, we show that $ST$ and  $STE$ admit additively graceful labeling.
\begin{thm} $ST$ admits an  additively graceful labeling.
\end{thm}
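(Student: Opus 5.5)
The plan is to give an explicit labeling, in the same style as the constructions for the all-negative $P_3$ with pendant vertices and for $B(m,n)$ above. In $ST$ the all-negative triangle on $u,v,w$ contributes $n=3$ negative edges. The $m$ pendant vertices $p_1,\ldots,p_m$ are joined to $w$ by positive edges. So $|V(ST)|=m+3$ and $|E(ST)|=m+3$. The labels must lie in $\{0,1,\ldots,m+\lceil\frac{3+1}{2}\rceil\}=\{0,1,\ldots,m+2\}$, which has exactly $m+3$ elements. Hence $f$ will be a bijection onto this set.

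First I handle the negative edges. Their sums $f(x)+f(y)$ must be exactly $\{1,2,3\}$. The only way to get three pairwise sums $1,2,3$ from three distinct nonnegative integers is to use the values $0,1,2$, since $0+1=1$, $0+2=2$ and $1+2=3$. So the triangle must receive $\{0,1,2\}$.

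Next I handle the positive edges. The pendant vertices then get the remaining labels $3,4,\ldots,m+2$, and their differences from $f(w)$ must be exactly $1,\ldots,m$. Choosing $f(w)=2$ gives differences $(2+i)-2=i$ for $i=1,\ldots,m$, which works. So I set
$f(u)=0$, $f(v)=1$, $f(w)=2$ and $f(p_i)=2+i$ for $i=1,\ldots,m$.

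Finally I verify the edge labels. The negative edges $uv$, $uw$, $vw$ receive $1$, $2$, $3$. The positive edges $wp_i$ receive $|f(p_i)-f(w)|=i$, giving $1,\ldots,m$. Injectivity is clear.

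No step is a real obstacle. The only point needing care is the forced choice of which triangle vertex carries the pendant edges. With $f(w)=0$ or $1$ the differences would be shifted and could not start at $1$. So $w$ must get the largest triangle label $2$, adjacent to the pendant block $3,\ldots,m+2$. I would include a figure analogous to Figure 1 for a small case such as $m=4$.
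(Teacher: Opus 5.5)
Your labeling $f(u)=0$, $f(v)=1$, $f(w)=2$, $f(p_i)=2+i$ for $1\le i\le m$ is the same as the paper's, and your check that the negative edges get $1,2,3$ and the positive edges get $1,\ldots,m$ matches its verification, so the proof is correct (the paper's index range $1,\ldots,m+2$ is a misprint for your $1,\ldots,m$). Your extra argument that this labeling is forced, with the triangle carrying $\{0,1,2\}$ and $w$ receiving $2$, is also correct but not needed to prove existence.
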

\begin{proof} Clearly $|V(ST)| = m+3$ and $|E(ST)|= m+3$. For $f : V(ST)\to\{0, 1, \ldots,(m+2)\}$, we define \\  
$f(u)= 0$, $f(v)= 1$, $f(w)= 2$  and $f(w_i)= (f(w)+i), i\rightarrow\{1,2, \ldots, (m +2)\}$, where $w_i's$ are the vertices joined to $w$ by positive edges. Clearly, the labels of the negative edges $uv$, $uw$ and $vw$  are $1$,$2$ and $3$ respectively, and the labels of the positive edges $ww_i, i=1,2,...,m$ are  $1,2,...,m$, respectively. Thus, $ST$ admits additively graceful labeling, and an additively graceful labeling  of $ST$ is shown in Figure 3.  
\end{proof} 

\begin{figure} [H]
\centering
\begin{tikzpicture} [scale= 1,auto=left]
\node[vertex] (v1) at (1,0) [fill, label=below : $6$] {};
\node[vertex] (v7) at (1.0,1)  [fill,label=below:$5$] {};
\node[vertex] (v11) at (-0.5,0) [fill,label=below:$2$]  {};
\node[vertex] (v4) at (-1.0,1) [fill,label=above:$4$] {};
\node[vertex] (v5) at (-2.5,1) [fill,label=left:$3$] {};
\node[vertex] (v10) at (-2.5,-1) [fill,label=left:$0$] {};
\node[vertex] (v6) at (-1.0,-1) [fill, label=below : $1$] {};

\path
(v11) edge  (v1)
(v11) edge [dashed] (v10)
(v10) edge [dashed] (v6)
(v11)  edge (v7)
(v11) edge (v5)
(v11) edge [dashed] (v6)
(v11) edge (v4)
(v11) edge (v1)
;
\end{tikzpicture}
\caption{}
\end{figure} 

Remark: It can be verified that $ST$ also admits graceful labeling, that is, it is a graceful signed graph. The underlying graph of $ST$ is graceful, but not additively graceful. 
\begin{thm} $STE$ admits an additively graceful labeling.   
\end{thm}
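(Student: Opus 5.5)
The plan is to give an explicit labeling, in the same way as the proofs of the theorems on $ST$ and on $B(m,n)$. Let $u,v,w$ be the vertices of $K_3$, with $uv$ the unique negative edge and $uw$, $vw$ positive. Then $u$ has negative degree one. Let $w_1,\ldots,w_m$ be the pendant vertices joined to $u$ by positive edges. Then $|V(STE)|=m+3$, there are $m+2$ positive edges and $n=1$ negative edge. So the labels come from $\{0,1,\ldots,(m+2)+\lceil 2/2\rceil\}=\{0,1,\ldots,m+3\}$, and the positive edge labels must be exactly $1,2,\ldots,m+2$.

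First we pin down the negative edge. Its label $f(u)+f(v)$ must equal $1$, which forces $\{f(u),f(v)\}=\{0,1\}$. The choice $f(u)=0$, $f(v)=1$ fails: the pendant edges at $u$ then have labels $f(w_i)\ge 2$, and the two triangle edges give $f(w)$ and $f(w)-1$, so label $1$ would force $f(w)=2$. I would therefore take $f(u)=1$ and $f(v)=0$.

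Next we place $w$. With $f(w)=k$, the positive edges $vw$ and $uw$ get labels $k$ and $k-1$, so we need $k\le m+2$. The pendant edges $uw_i$ get labels $f(w_i)-1$, and these must cover $\{1,\ldots,m+2\}\setminus\{k-1,k\}$. Choosing $k=m+2$ makes this work cleanly. Set
\[
f(u)=1,\quad f(v)=0,\quad f(w)=m+2,\quad f(w_i)=1+i,\ i=1,2,\ldots,m.
\]

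Finally we verify the labeling. It is injective, since the labels used are $0,1,2,\ldots,m+2$, all within $\{0,\ldots,m+3\}$. The negative edge $uv$ gets label $1$. The positive edges $vw$ and $uw$ get labels $m+2$ and $m+1$, and the pendant edges $uw_i$ get labels $i$ for $i=1,\ldots,m$. So the positive labels are exactly $\{1,\ldots,m+2\}$ and the negative label set is $\{1\}$, as the definition requires.

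The only real obstacle is the parity issue at the triangle: the two positive triangle edges must receive consecutive labels, and these must fit with the pendant labels. This is resolved by putting $w$ at the top label $m+2$ and using $f(u)=1$ rather than $0$. A figure for a small $m$, say $m=3$ with labels $1,0,5$ on $u,v,w$ and $2,3,4$ on the pendants, would illustrate the construction.
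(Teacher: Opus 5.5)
Your labeling is correct and proves the theorem. With $f(u)=1$, $f(v)=0$, $f(w)=m+2$ and $f(w_i)=1+i$, the $m+3$ vertices receive the distinct labels $0,1,\ldots,m+2$. The negative edge gets label $1$, and the positive edges get $m+2$, $m+1$ and $1,\ldots,m$. This is the same method as the paper (an explicit labeling), only with a different labeling.

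However, your side claim that the choice $f(u)=0$, $f(v)=1$ ``fails'' is false. Forcing $f(w)=2$ is not a contradiction: $vw$ and $uw$ then get labels $1$ and $2$, and setting $f(w_i)=i+2$ gives the pendant edges the labels $3,\ldots,m+2$. This is exactly the paper's labeling up to renaming. The paper attaches the pendants at its vertex $w$ and uses $f(w)=0$, $f(u)=1$, $f(v)=2$, $f(w_i)=i+2$.

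So there is no parity obstacle at the triangle. Even in your normalization $f(u)=1$, $f(v)=0$, every choice $f(w)=k$ with $2\le k\le m+2$ works if the pendant labels are taken to be $\{2,\ldots,m+3\}\setminus\{k,k+1\}$. Your verification does not use the erroneous remark, so the proof stands. You should still correct or remove the motivating sentence and the closing ``obstacle'' paragraph.
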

\begin{proof} Clearly $|V(STE)| = m+3$ and $|E(STE)|= m+3$. For $f : V(STE)\to\{0, 1,  \ldots,(m+3)\}$, we define \\  
$f(u) = 1$, $f(v) = 2$,  $f(w) = 0$, and $f(w_i)= f(w)+(i+2) ,i\rightarrow\{1,2, \ldots, (m +2)\}$, where $w_i's$ are joined   to $w$ by the positive edges and $d^-(u)= d^-{w} = 1$ . Clearly, the label of  a negative edge $uw$ of $STE$ is $1$ and the labels of positive edges $uv$,$vw$ and   $w_i, i\rightarrow\{1,2, \ldots, (m +2)\}$,   are  $1,2, \ldots,(m+2)$, respectively. Thus, $STE$ admits additively graceful labeling, and hence is additively graceful. An additively graceful labeling of $STE$ is shown in Figure 4.  
\end {proof}

\begin{figure} [H]
\centering
\begin{tikzpicture} [scale=1,auto=left]
\node[vertex] (v1) at (1,0) [fill, label=below : $6$] {};
\node[vertex] (v7) at (1.0,1)  [fill,label=below:$5$] {};
\node[vertex] (v11) at (-0.5,0) [fill,label=below:$0$]  {};
\node[vertex] (v4) at (-1.0,1) [fill,label=above:$4$] {};
\node[vertex] (v5) at (-2.5,1) [fill,label=left:$3$] {};
\node[vertex] (v10) at (-2.5,-1) [fill,label=left:$2$] {};
\node[vertex] (v6) at (-1.0,-1) [fill, label=below : $1$] {};

\path
(v11) edge (v1)
(v11) edge (v10)
(v10) edge (v6)
(v11)  edge (v7)
(v11) edge (v5)
(v11) edge [dashed] (v6)
(v11) edge (v4)
(v11) edge (v1)
;
\end{tikzpicture}
\caption{}
\end{figure}

It is easy to verify that the underlying graph of $STE$ admits graceful labeling.
In Figure 5, we show  additively graceful labeling for signed graphs that have  a triangle as the induced subgraph.  Verify that the graph obtained by joining $x$ pendent vertices to a vertex of a triangle is graceful.

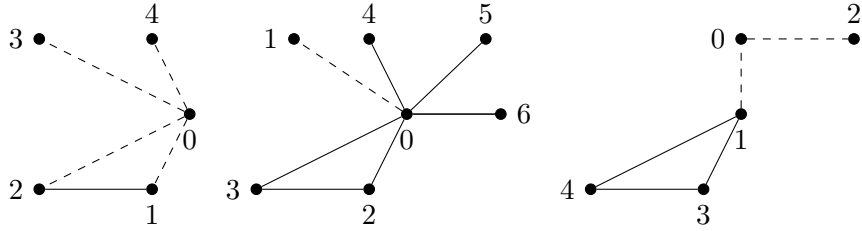
\begin{figure} [H]
\centering
\begin{tikzpicture} [scale= 1,auto=left]
\node[vertex] (v11) at (-0.5,0) [fill,label=below:$0$]  {};
\node[vertex] (v4) at (-1.0,1) [fill,label=above:$4$] {};
\node[vertex] (v5) at (-2.5,1) [fill,label=left:$3$] {};
\node[vertex] (v10) at (-2.5,-1) [fill,label=left:$2$] {};
\node[vertex] (v6) at (-1.0,-1) [fill, label=below : $1$] {};

\path
(v11) edge [dashed] (v10)
(v10) edge (v6)
(v11) edge [dashed] (v5)
(v11) edge [dashed] (v6)
(v11) edge [dashed] (v4)

;
\end{tikzpicture}
\begin{tikzpicture} [scale= 1,auto=left]
\node[vertex] (v1) at (0.75,0) [fill, label=right: $6$] {};
\node[vertex] (v7) at (0.550,1)  [fill,label=above:$5$] {};
\node[vertex] (v11) at (-0.5,0) [fill,label=below:$0$]  {};
\node[vertex] (v4) at (-1.0,1) [fill,label=above:$4$] {};

\node[vertex] (v5) at (-2.0,1) [fill,label=left:$1$] {};
\node[vertex] (v10) at (-2.5,-1) [fill,label=left:$3$] {};
\node[vertex] (v6) at (-1.0,-1) [fill, label=below : $2$] {};

\path
(v11) edge (v1)
(v11) edge (v10)
(v10) edge (v6)
(v11)  edge (v7)
(v11) edge [dashed] (v5)
(v11) edge  (v6)
(v11) edge (v4)
(v11) edge (v1)
;
\end{tikzpicture}
\begin{tikzpicture} [scale= 1,auto=left]
%\node[vertex] (v1) at (0.75,0) [fill, label=below : $6$] {};
%\node[vertex] (v7) at (0.550,1)  [fill,label=below:$5$] {};
\node[vertex] (v11) at (-0.5,0) [fill,label=below:$1$]  {};
\node[vertex] (v4) at (1.0,1) [fill,label=above:$2$] {};

\node[vertex] (v5) at (-0.50,1) [fill,label=left:$0$] {};
\node[vertex] (v10) at (-2.5,-1) [fill,label=left:$4$] {};
\node[vertex] (v6) at (-1.0,-1) [fill, label=below : $3$] {};

\path

(v11) edge (v10)
(v10) edge (v6)
(v11) edge [dashed] (v5)
(v11) edge  (v6)
(v5) edge [dashed] (v4)

;
\end{tikzpicture}
\caption{Left: a \, Middle: b  Right: c}
\end{figure} 
Verify that the graph obtained by joining $x$ pendent vertices to a vertex of a triangle is graceful.\\
\indent Now we give the construction of an  additively graceful signed graph obtained from negative homogeneous $K_4$ by labeling its vertices with  $u,v,w,x$ and joining vertex $x$ to $m$ vertices by positive edges.
\begin{thm} A signed graph $S$ obtained from a negative homogeneous $K_4$ by joining a vertex of $K_4$ to m pendant vertices by positive edges is an additively graceful signed graph.  
\end{thm}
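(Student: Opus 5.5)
The plan is to give an explicit labeling, as in the earlier constructions in this section: first choose labels for the four vertices of the all-negative $K_4$, then place the pendant vertices on consecutive integers just above the label of $x$.

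\textbf{Counting and the target range.} Let $S$ have the all-negative $K_4$ on $u,v,w,x$, with pendant vertices $x_1,\ldots,x_m$ joined to $x$ by positive edges. Then $|V(S)|=m+4$, the number of negative edges is $n=6$, and the number of positive edges is $m$. By the definition of an additively graceful signed graph, the labels must lie in $\{0,1,\ldots,m+\lceil 7/2\rceil\}=\{0,1,\ldots,m+4\}$. This range has exactly $m+5$ values, which leaves room for one unused label.

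\textbf{The $K_4$ labels.} This is the only step that needs any thought. We need four distinct nonnegative integers whose six pairwise sums are exactly $\{1,2,\ldots,6\}$. Getting the sum $1$ forces the labels $0$ and $1$. Getting the sum $2$ then forces $2$, because $1+1$ is not allowed. The three pairwise sums of $0,1,2$ are $1,2,3$. The fourth label $a$ must then give sums $a,a+1,a+2$ equal to $\{4,5,6\}$, so $a=4$. We therefore set
\[
f(u)=0,\qquad f(v)=1,\qquad f(w)=2,\qquad f(x)=4.
\]
The negative edges receive the sums $uv\mapsto 1$, $uw\mapsto 2$, $vw\mapsto 3$, $ux\mapsto 4$, $vx\mapsto 5$ and $wx\mapsto 6$. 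These are exactly $1,\ldots,6$.

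\textbf{The pendant labels.} Set $f(x_i)=f(x)+i=4+i$ for $i=1,\ldots,m$. These labels are $5,6,\ldots,m+4$. They are distinct, they avoid $\{0,1,2,4\}$, and they do not exceed the bound $m+4$. So $f$ is injective into $\{0,\ldots,m+4\}$, with only the label $3$ unused. Each positive edge $xx_i$ receives $|f(x)-f(x_i)|=i$, so the positive edge labels are exactly $1,2,\ldots,m$.

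With both edge-label sets correct, $f$ is an additively graceful labeling of $S$, which proves the theorem. A small figure for a specific $m$ (for example $m=3$) can illustrate the construction, in the style of Figures 1--4.
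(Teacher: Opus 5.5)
Your proposal is correct and is the same construction as the paper's proof: $f(u)=0$, $f(v)=1$, $f(w)=2$, $f(x)=4$ and $f(x_i)=4+i$, which gives negative edge labels $1,\ldots,6$ and positive edge labels $1,\ldots,m$ within $\{0,\ldots,m+4\}$. Your extra argument that the $K_4$ labels $\{0,1,2,4\}$ are forced is valid, but the existence claim does not need it.
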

\begin{proof} $|V(S)| = m+4$ and $|E(S)| =m+6$. Now we show that $S$ admits additively graceful labeling. For $f : V(S)\to\{0, 1, \ldots,(m+4)\}$, we define 
$f(u) = 0$, $f(v)=1$, $f(w)= 2$,$f(x)=4$, and $f(w_i)= f(x)+i ,i\rightarrow\{1,2, \ldots, m \}$, where $w_i's$ are the vertices incident with  $w$ by positive edges. Clearly, $1,2,3,4,5$ and $6$ are the labels of the negative edges $ uv, uw, vw, ux,vx$,and $ wx$, respectively. The labels of the positive edges $xw_i, i=1,2,...,m$ are $1,2,...,m$, respectively. Thus, $S$ admits additively graceful labeling. The  labels  of this newly constructed additively graceful signed graph are shown in Figure 6(a).
\end {proof}
Observe that the underlying graph of $S$ admits graceful labeling as shown in Figure 6(b).
\\

\begin{figure} [H]
\centering
\begin{tikzpicture} [scale=0.75,auto=left]
\node[vertex] (v1) at (0,-1.5) [fill, label=below : $4$] {};
\node[vertex] (v2) at (2.5,-0.5)  [fill,label=right:$8$] {};
\node[vertex] (v3) at (-0.5,0) [fill,label=above:$0$]  {};
\node[vertex] (v4) at (2.5,1) [fill,label=right:$6$] {};
\node[vertex] (v5) at (2.5,2) [fill,label=right:$5$] {};
\node[vertex] (v6) at (-3,-3) [fill,label=left:$1$] {};
\node[vertex] (v7) at (3.0,-3) [fill, label=right : $2$] {};
\node[vertex] (v8) at (2.5,-1.11) [fill, label=right : $9$] {};
\node[vertex] (v9) at (2.5,0.45) [fill,label=right:$7$] {};

\path
(v6) edge [dashed] (v7)
(v3) edge [dashed] (v6)
(v3) edge [dashed] (v7)
(v1) edge (v4)
(v3) edge [dashed] (v1)
(v6) edge [dashed] (v1)
(v7) edge [dashed] (v1)
(v1) edge (v2)
(v1) edge (v5)
(v1) edge (v9)
(v1) edge (v8)
;
\end{tikzpicture}
\begin{tikzpicture} [scale= 0.75,auto=left]

\node[vertex] (v1) at (0,-1.5) [fill, label=below : $0$] {};
\node[vertex] (v2) at (2.5,-0.5)  [fill,label=right:$10$] {};
\node[vertex] (v3) at (-0.5,0) [fill,label=above:$1$]  {};
\node[vertex] (v4) at (2.5,1) [fill,label=right:$8$] {};
\node[vertex] (v5) at (2.5,2) [fill,label=right:$5$] {};
\node[vertex] (v6) at (-3,-3) [fill,label=left:$3$] {};
\node[vertex] (v7) at (3.0,-3) [fill, label=right : $7$] {};
\node[vertex] (v8) at (2.5,-1.11) [fill, label=right : $11$] {};
\node[vertex] (v9) at (2.5,0.45) [fill,label=right:$9$] {};

\path
(v6) edge (v7)
(v3) edge (v6)
(v3) edge (v7)
(v1) edge (v4)
(v3) edge  (v1)
(v6) edge (v1)
(v7) edge (v1)
(v1) edge (v2)
(v1) edge (v5)
(v1) edge (v9)
(v1) edge (v8)
;
\end{tikzpicture}
\caption{Left:  6 (a)\, Right: 6 (b)}
\end{figure}
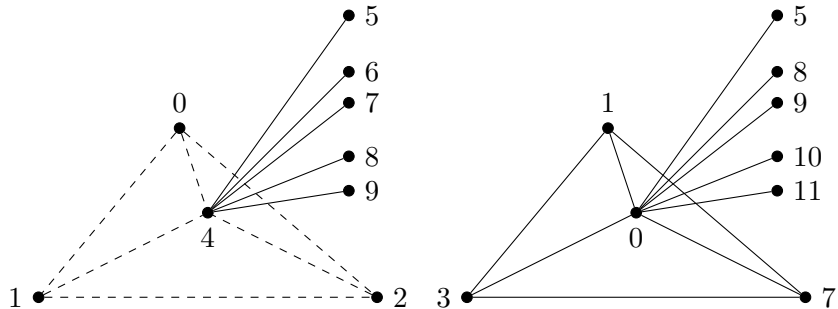

 Few  additively graceful signed graphs, whose underlying graph is isomorphic to $K_4$, have  been given in \cite{J}, and on them we construct additively graceful signed graphs as shown in Figure 7 and Figure 8.

\begin{figure} [H]
\centering
\begin{tikzpicture} [scale=0.65,auto=left]
\node[vertex] (v1) at (0,-1.5) [fill, label=below : $3$] {};
\node[vertex] (v2) at (2.5,-0.5)  [fill,label=right:$8$] {};
\node[vertex] (v3) at (-0.5,0) [fill,label=above:$1$]  {};
\node[vertex] (v4) at (2.5,1) [fill,label=right:$6$] {};
\node[vertex] (v5) at (2.5,2) [fill,label=right:$5$] {};
\node[vertex] (v6) at (-3,-3) [fill,label=left:$0$] {};
\node[vertex] (v7) at (3.0,-3) [fill, label=right : $2$] {};
\node[vertex] (v8) at (2.5,-1.11) [fill, label=right : $9$] {};
\node[vertex] (v9) at (2.5,0.45) [fill,label=right:$7$] {};

\path
(v6) edge [dashed] (v7)
(v3) edge [dashed] (v6)
(v3) edge (v7)
(v1) edge (v4)
(v3) edge [dashed] (v1)
(v6) edge [dashed] (v1)
(v7) edge [dashed] (v1)
(v1) edge (v2)
(v1) edge (v5)
(v1) edge (v9)
(v1) edge (v8)
;
\end{tikzpicture}
\begin{tikzpicture} [scale=0.65,auto=left]
\node[vertex] (v1) at (0,-1.5) [fill, label=below : $3$] {};
\node[vertex] (v2) at (2.5,-0.5)  [fill,label=right:$10$] {};
\node[vertex] (v3) at (-0.5,0) [fill,label=above:$0$]  {};
\node[vertex] (v4) at (-2.5,1) [fill,label=left:$6$] {};
\node[vertex] (v5) at (2.5,2) [fill,label=right:$7$] {};
\node[vertex] (v6) at (-3,-3) [fill,label=left:$1$] {};
\node[vertex] (v7) at (3.0,-3) [fill, label=right : $2$] {};
\node[vertex] (v8) at (-2.5,-0.5) [fill, label=left : $4$] {};
\node[vertex] (v11) at (-2.5,0.5) [fill, label=left : $5$] {};
\node[vertex] (v9) at (2.5,0.5) [fill,label=right:$9$] {};
\node[vertex] (v10) at (2.5,1.13) [fill,label=right:$8$] {};
\path
(v6) edge [dashed] (v7)
(v3) edge [dashed] (v6)
(v3) edge [dashed] (v7)
(v3) edge [dashed] (v4)
(v3) edge  (v1)
(v6) edge  (v1)
(v7) edge  (v1)
(v1) edge (v2)
(v1) edge (v5)
(v1) edge (v9)
(v3) edge [dashed] (v8)
(v3) edge [dashed] (v11)
(v1) edge (v10)
;
\end{tikzpicture}
\caption{Left:  (a), Right:  (b)}
\end{figure}

\begin{figure} [H]
\centering
\begin{tikzpicture} [scale=0.65,auto=left]
\node[vertex] (v1) at (0,-1.5) [fill, label=below : $4$] {};
\node[vertex] (v2) at (1.5,0)  [fill,label=above:$7$] {};
\node[vertex] (v3) at (-0.5,0) [fill,label=right:$0$]  {};
\node[vertex] (v4) at (-1.0,1) [fill,label=above:$3$] {};
\node[vertex] (v5) at (3.5,0) [fill,label=above:$8$] {};
\node[vertex] (v6) at (-3,-3) [fill,label=left:$1$] {};
\node[vertex] (v7) at (3.0,-3) [fill, label=right : $2$] {};

\path
(v6) edge  (v7)
(v3) edge [dashed] (v6)
(v3) edge [dashed] (v7)
(v3) edge [dashed] (v4)
(v3) edge (v1)
(v6) edge (v1)
(v7) edge (v1)
(v7) edge (v2)
(v7) edge (v5)
;
\end{tikzpicture}
\begin{tikzpicture} [scale=0.65,auto=left]
\node[vertex] (v1) at (0,-1.5) [fill, label=below : $5$] {};
\node[vertex] (v2) at (2.5,-0.5)  [fill,label=right:$10$] {};
\node[vertex] (v3) at (-0.5,0) [fill,label=above:$0$]  {};
\node[vertex] (v4) at (-2.5,1) [fill,label=above:$7$] {};
\node[vertex] (v5) at (2.5,2) [fill,label=right:$8$] {};
\node[vertex] (v6) at (-3,-3) [fill,label=left:$1$] {};
\node[vertex] (v7) at (3.0,-3) [fill, label=right : $2$] {};
\node[vertex] (v8) at (-2.5,-0.5) [fill, label=below: $6$] {};
\node[vertex] (v9) at (2.5,0.5) [fill,label=right:$9$] {};

\path
(v6) edge  (v7)
(v3) edge [dashed] (v6)
(v3) edge (v7)
(v3) edge (v4)
(v3) edge (v1)
(v6) edge (v1)
(v7) edge (v1)
(v3) edge (v2)
(v3) edge (v5)
(v3) edge (v9)
(v3) edge (v8)
;
\end{tikzpicture}
\caption{Left: (a), Right: (b)}
\end{figure} 

In Figure 9 it has been shown that the signed graph given in Figure 7(b) is a graceful signed graph. It is easy to see that the signed graphs as given in Figure 8 are graceful signed graphs. 

\begin{figure} [H]
\centering
\begin{tikzpicture} [scale=1,auto=left]
\node[vertex] (v1) at (0,-1.5) [fill, label=below : $3$] {};
\node[vertex] (v2) at (2.5,-0.5)  [fill,label=right:$10$] {};
\node[vertex] (v3) at (-0.5,0) [fill,label=above:$0$]  {};
\node[vertex] (v4) at (-2.5,1) [fill,label=left:$6$] {};
\node[vertex] (v5) at (2.5,2) [fill,label=right:$7$] {};
\node[vertex] (v6) at (-3,-3) [fill,label=left:$5$] {};
\node[vertex] (v7) at (3.0,-3) [fill, label=right : $2$] {};
\node[vertex] (v8) at (-2.5,-0.5) [fill, label=left : $1$] {};
\node[vertex] (v11) at (-2.5,0.5) [fill, label=left : $4$] {};
\node[vertex] (v9) at (2.5,0.5) [fill,label=right:$9$] {};
\node[vertex] (v10) at (2.5,1.3) [fill,label=right:$8$] {};
\path
(v6) edge [dashed] (v7)
(v3) edge [dashed] (v6)
(v3) edge [dashed] (v7)
(v3) edge [dashed] (v4)
(v3) edge  (v1)
(v6) edge  (v1)
(v7) edge  (v1)
(v1) edge (v2)
(v1) edge (v5)
(v1) edge (v9)
(v3) edge [dashed] (v8)
(v3) edge [dashed] (v11)
(v1) edge (v10)
;
\end{tikzpicture}
\caption{}
\end{figure}

\section{ Non-divisible Sum Graphs} 
In Figure 10, we have given some non-divisible sum  graphs $G_m,_n$  for given values of $m$ and $n$  which are additively graceful.  The labels of the vertices are from the set $\{0, 1, ...,\left\lceil\frac{q+1}{2}\right\rceil\}$, $q$ is in $G_m,_n$.
We observe that these graphs are complement reducible.

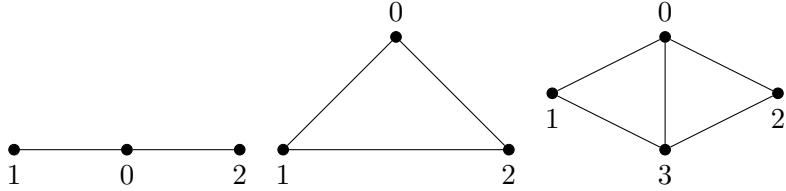
\begin{figure} [H]
\centering
\begin{tikzpicture} [scale=1.5,auto=left]
\node[vertex] (v1) at (0,0) [fill, label=below : $0$] {};
\node[vertex] (v2) at (-1,0)  [fill,label=below:$1$] {};
\node[vertex] (v3) at (1,0) [fill,label=below:$2$]  {};

\path
(v1) edge  (v2)
(v1) edge  (v3)
;
\end{tikzpicture}
\begin{tikzpicture} [scale=1.5,auto=left]
\node[vertex] (v1) at (0,0) [fill, label=above : $0$] {};
\node[vertex] (v2) at (-1,-1)  [fill,label=below:$1$] {};
\node[vertex] (v3) at (1,-1) [fill,label=below:$2$]  {};

\path
(v1) edge  (v2)
(v1) edge  (v3)
(v2) edge (v3)
;
\end{tikzpicture}
\begin{tikzpicture} [scale=1.5,auto=left]
\node[vertex] (v1) at (0,0) [fill, label=above:$0$] {};
\node[vertex] (v2) at (0,-1)  [fill,label=below:$3$] {};
\node[vertex] (v3) at (1,-.5) [fill,label=below:$2$]  {};
\node[vertex] (v4) at (-1,-0.5) [fill,label=below:$1$]  {};

\path
(v1) edge  (v2)
(v1) edge  (v3)
(v2) edge (v3)
(v1) edge (v4)
(v2) edge (v4)
;
\end{tikzpicture}
\caption{Left:$G_{2,3}$,\, Middle: $G_{6,3}$,\, Right:$G_{6,4}$}
\end{figure} 

The question that is naturally suggested by these results is whether there exist non-divisible sum  graphs $G_m,_n$ for given values of $m$ and $n$ that are additively graceful.
\section{ Further Scope}
In this paper, we have constructed some additively graceful signed graphs. Can the order and size of these signed graphs be increased?  More study is needed on  non-divisible sum  graphs, which admit   additively graceful labeling.\\  
 
Acknowledgements

 My sincere thanks to Dr. Ivy Chakrabarty, CHRIST ( Deemed to be University) Bangaluru( India)  for helping me in drawing the figures, without which it would not have been possible for me to complete this work.

\end{document}